\newtheorem*{theorem}{Theorem}
\newtheorem*{proposition}{Proposition}
\newtheorem{lemma}{Lemma}
\newtheorem*{corollary}{Corollary}
\newtheorem*{conjecture}{Conjecture}
\def\ralt{\mathcal{RA}lt}
\def\lalt{\mathcal{LA}lt}
\def\alt{\mathcal Alt}
\begin{document}

\title{The alternative operad is not Koszul}

\author{Askar Dzhumadil'daev}
\address{Kazakh-British Technical University, Tole bi 59, Almaty 050000, Kazakhstan}
\email{dzhuma@hotmail.com}

\author{Pasha Zusmanovich}
\address{Reykjavik Academy, Iceland}
\email{pasha@akademia.is}

\date{last revised May 4, 2011}
\thanks{\textsf{arXiv:0906.1272}; Experiment. Math. \textbf{20} (2011), 138--144}

\maketitle

In the online compendium \cite{loday}, it is asked whether the alternative operad is 
Koszul. The purpose of this note is to demonstrate that the answer to this question is 
negative. In doing so, we are helped with the programs Albert \cite{albert} and 
PARI/GP \cite{pari}.

\section{The alternative operad and its Koszul dual}\label{intro}

Recall that an algebra is called \textit{right-alternative} if it satisfies the identity
\begin{equation}\label{1}
(xy)y = x(yy) ,
\end{equation}
and \textit{left-alternative} if it satisfies the identity
\begin{equation}\label{2}
(xx)y = x(xy) .
\end{equation}
An algebra which is both right-alternative and left-alternative is called
\textit{alternative}. 

Linearizing identities (\ref{1}) and (\ref{2}), we get
\begin{equation}\label{ra}\tag{RA}
(xy)z + (xz)y - x(yz) - x(zy) = 0
\end{equation}
and
\begin{equation}\label{la}\tag{LA}
(xy)z + (yx)z - x(yz) - y(xz) = 0 ,
\end{equation}
respectively.
If the characteristic of the ground field is different from $2$, these identities are
equivalent to the initial ones, and they define binary quadratic operads 
$\ralt$, $\lalt$ and $\alt$ (dubbed \textit{right-alternative}, 
\textit{left-alternative} and \textit{alternative operads}).
In characteristic $2$ things go berserk: identities (\ref{1}) and (\ref{2}) 
are not equivalent to the corresponding linearized identities, so it is impossible 
to encode them with operads in a straightforward manner. We will exclude this case 
from our considerations.

Right- and left-alternative algebras are opposite to each other,
i.e., if $A$ is a right-alternative algebra, then the algebra defined on the same
vector space $A$ with multiplication $x \circ y = yx$ is a left-alternative algebra, 
and vice versa. Hence all the statements below for left-alternative
algebras automatically follow from the corresponding statements for right-alternative
ones, and in the proofs we will consider the right-alternative case only.
Most of these statements are trivial and/or have been considered previously in the 
literature, but they provide a good warm-up before the more difficult alternative case.

Every associative algebra is alternative. 
An example of a non-associative alternative algebra is the octonion 
algebra, appearing prominently in mathematics and physics (see, for example, the 
excellent survey \cite{baez}).
Note also that free alternative algebras are much more difficult objects than,
for example, their associative or Lie counterparts, and are still not understood 
sufficiently well.

For the general operadic business, including important notions of Koszulity and Koszul 
duality, we refer to the book \cite{mss-book} and the foundational paper \cite{gk}. 
However, to understand this note it is enough
to adopt an intuitive and primitive view on operads as polylinear parts of the corresponding free 
algebras, and to accept the Ginzburg--Kapranov criterion for Koszulity, as described below, 
for granted.

\begin{proposition}
Each of the operads Koszul dual to the right-alternative, left-alternative and alternative operad
is defined by two identities: associativity and the identity
\begin{equation}\tag{RA$^!$}\label{dra}
xyz + xzy = 0
\end{equation}
in the right-alternative case,
\begin{equation*}
xyz + yxz = 0
\end{equation*}
in the left-alternative case, and
\begin{equation}\label{nilp}\tag{A$^!$}
xyz + yxz + zxy + xzy + yzx + zyx = 0 .
\end{equation}
in the alternative case.
\end{proposition}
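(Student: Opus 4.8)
The plan is to run the Ginzburg--Kapranov construction of the quadratic (Koszul) dual: for a quadratic operad $\mathcal P=\mathcal P(E,R)$ with $E$ concentrated in arity $2$ one has $\mathcal P^{!}=\mathcal P(E^{\vee},R^{\perp})$, where $E^{\vee}=E^{*}\otimes\operatorname{sgn}_{2}$ and $R^{\perp}\subseteq\mathcal F(E^{\vee})(3)$ is the annihilator of $R\subseteq\mathcal F(E)(3)$ for the Ginzburg--Kapranov pairing. Here $E$ is the regular representation of $S_{2}$ (one binary operation $xy$, no symmetry imposed), so $E^{\vee}\cong E$ and the dual operad is again generated by a single non-symmetric binary operation; the space $\mathcal F(E)(3)$ is $12$-dimensional, with basis the ``left combs'' $L_{\sigma}=(x_{\sigma(1)}x_{\sigma(2)})x_{\sigma(3)}$ and the ``right combs'' $R_{\sigma}=x_{\sigma(1)}(x_{\sigma(2)}x_{\sigma(3)})$, $\sigma\in S_{3}$, and likewise for $\mathcal F(E^{\vee})(3)$. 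In this basis the pairing is block-diagonal in the comb shape and diagonal in $\sigma$, namely $\langle L_{\sigma},L^{\vee}_{\tau}\rangle=\operatorname{sgn}(\sigma)\,\delta_{\sigma\tau}$ and $\langle R_{\sigma},R^{\vee}_{\tau}\rangle=-\operatorname{sgn}(\sigma)\,\delta_{\sigma\tau}$, the relative sign being the one dictated by the self-duality $\mathrm{Ass}^{!}=\mathrm{Ass}$.

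For the right-alternative operad, I would first observe that the span $R_{\mathrm{RA}}$ of all permutations of \eqref{ra} is $3$-dimensional, with basis $\rho_{c}=L_{cab}+L_{cba}-R_{cab}-R_{cba}$ for $c\in\{1,2,3\}$ (where $\{a,b\}=\{1,2,3\}\setminus\{c\}$), and that each $\rho_{c}$ is a sum of two associators $L_{\sigma}-R_{\sigma}$ — which is the structural reason associativity must reappear in the dual (equivalently: associative algebras are right-alternative, so $\ralt$ surjects onto $\mathrm{Ass}$, and dualizing shows $\ralt^{!}$ is a quotient of $\mathrm{Ass}$). Pairing a general element of $\mathcal F(E^{\vee})(3)$ against $\rho_{1},\rho_{2},\rho_{3}$ yields three independent linear conditions, so $\dim R_{\mathrm{RA}}^{\perp}=9$. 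I would then exhibit nine elements of $R_{\mathrm{RA}}^{\perp}$, namely the six associators $L^{\vee}_{\sigma}-R^{\vee}_{\sigma}$ and the three permutations $L^{\vee}_{cab}+L^{\vee}_{cba}$ of \eqref{dra}; a one-line substitution verifies that they meet the three conditions, and they are linearly independent because the last three have vanishing right-comb coordinates and pairwise-disjoint left-comb supports, hence cannot be combined with associators. Therefore $R_{\mathrm{RA}}^{\perp}$ is spanned by associativity and \eqref{dra}, which is the claim for $\ralt$; the left-alternative case follows by passing to opposite algebras.

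For the alternative operad one has $R_{\alt}=R_{\mathrm{RA}}+R_{\mathrm{LA}}$, hence $R_{\alt}^{\perp}=R_{\mathrm{RA}}^{\perp}\cap R_{\mathrm{LA}}^{\perp}$. A short computation shows $R_{\mathrm{RA}}\cap R_{\mathrm{LA}}$ is the line through $\sum_{\sigma}L_{\sigma}-\sum_{\sigma}R_{\sigma}$, so $\dim R_{\alt}=5$ and $\dim R_{\alt}^{\perp}=7$. On the other hand $R_{\alt}^{\perp}$ contains the six associators together with the totally symmetric element $\sum_{\sigma}L^{\vee}_{\sigma}$: this element is simultaneously the sum over $c$ of the left-comb forms of \eqref{dra} and the sum over $c$ of the corresponding left-comb forms of the left-alternative dual relation $xyz+yxz=0$, hence it lies in both $R_{\mathrm{RA}}^{\perp}$ and $R_{\mathrm{LA}}^{\perp}$, and modulo associativity it represents \eqref{nilp}. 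Since it is not a combination of associators, these seven elements span $R_{\alt}^{\perp}$, and the alternative case follows.

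I expect the only genuine difficulty to be bookkeeping rather than conceptual: pinning down the Ginzburg--Kapranov pairing and the identification $E^{\vee}\cong E$ with consistent sign conventions, and checking scrupulously that the chosen comb representatives of the associative-type identities \eqref{dra} and \eqref{nilp} are independent modulo the span of associators. All of this is routine linear algebra in dimension $12$ and is easily double-checked by hand or with PARI/GP.
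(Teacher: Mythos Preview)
Your argument is correct but proceeds differently from the paper. You compute the dual straight from the Ginzburg--Kapranov definition $\mathcal P^{!}=\mathcal P(E^{\vee},R^{\perp})$: you identify the relation spaces $R_{\mathrm{RA}}$, $R_{\mathrm{LA}}$, $R_{\alt}$ inside the $12$-dimensional $\mathcal F(E)(3)$, count dimensions, and then fill the orthogonal complements by exhibiting enough independent elements (associators plus the comb forms of \eqref{dra} or \eqref{nilp}); the observation $R_{\alt}^{\perp}=R_{\mathrm{RA}}^{\perp}\cap R_{\mathrm{LA}}^{\perp}$ makes the alternative case fall out of the one-sided cases. The paper instead uses the characterization of $\mathcal P^{!}$ via tensor products: an algebra $A$ is a $\mathcal P^{!}$-algebra precisely when $L\otimes A$ is Lie under the commutator bracket for every $\mathcal P$-algebra $L$. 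Concretely, it writes the Jacobi identity for $x\otimes a$, $y\otimes b$, $z\otimes c$, reduces the $L$-side to the seven-element basis of $\alt(3)$, and reads off the identities forced on $A$ --- associativity and \eqref{nilp}. Your approach is more definitional and makes the inclusion $\mathrm{Ass}=\mathrm{Ass}^{!}\twoheadrightarrow\alt^{!}$ structurally obvious; the paper's approach avoids fixing sign conventions for the GK pairing and may read more easily for those less fluent in operadic duality, at the cost of first working out a basis of $\alt(3)$.
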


In the alternative case, this is stated in \cite{loday} without proof, 
so we will provide a simple (and pretty much standard for such situations) 
proof for completeness.
Following \cite{loday}, we will call algebras over the corresponding Koszul dual operads
\textit{dual right-alternative}, \textit{dual left-alternative}
and \textit{dual alternative}, respectively.

\begin{proof}
Let $R$ be the space of quadratic relations of the alternative operad, i.e.,
the space generated by the left-hand sides of identities (\ref{ra}) and (\ref{la}),
and $R^\perp$ be the space of quadratic relations of the dual alternative operad.

Identities (\ref{ra}) and (\ref{la}) imply that we may take the images of the $7$ monomials 
$(xy)z$, $(yx)z$, $(xz)y$, $(zx)y$, $(yz)x$, $(zy)x$, $z(xy)$
as the basis of $\alt(3)$, with the remaining monomials expressed through them as
follows:
\begin{align}
z(yx) &= \phantom{-} (zx)y + (zy)x - z(xy) \notag \\
y(zx) &=          -  (zx)y + (yz)x + z(xy) \notag \\
y(xz) &= \phantom{-} (yx)z + (zx)y - z(xy) \label{yoyo}\\
x(yz) &= \phantom{-} (xy)z - (zx)y + z(xy) \notag \\
x(zy) &= \phantom{-} (xz)y + (zx)y - z(xy) \notag .
\end{align}
In particular, 
$$
\dim \alt(3) = \dim R^\perp = 7
$$
and 
$$
\dim \alt^!(3) = \dim R = 3!\,C_2 - 7 = 5
$$ 
(here and below, $C_n = \frac{(2n)!}{n!(n+1)!}$ denotes the $n$th Catalan number).

To obtain identities defining the dual alternative operad, it is convenient to
use the fact that if $L$ is an alternative algebra, and $A$ is a dual alternative 
algebra, then their tensor product $L \otimes A$ equipped with the bracket 
$$
[x \otimes a, y \otimes b] = xy \otimes ab - yx \otimes ba
$$
for $x,y\in L$, $a,b\in A$, is a Lie algebra.
Writing the Jacobi identity for triple $x\otimes a$, $y\otimes b$, $z\otimes c$
for $x,y,z\in L$, $a,b,c\in A$, substituting in it all equalities (\ref{yoyo}), 
and collecting similar terms, we get:
\begin{align*}
  & (xy)z \otimes ((ab)c - a(bc))   \\
+\> & (yx)z \otimes (b(ac) - (ba)c)   \\
+\> & (xz)y \otimes (a(cb) - (ac)b)   \\
+\> & (zx)y \otimes (a(bc) + a(cb) + b(ac) + b(ca) + (ca)b + c(ba))  \\
+\> & (yz)x \otimes ((bc)a - b(ca))   \\
+\> & (zy)x \otimes (c(ba) - (cb)a)   \\
-\> & z(xy) \otimes (a(bc) + a(cb) + b(ac) + b(ca) + c(ab) + c(ba))  \\ 
=\> & 0 ,
\end{align*}
and the claimed identities follow.

Now it is straightforward to check that the so obtained relations are orthogonal to 
the alternative relations $R$ under the standard pairing 
(as defined in \cite[\S 2.1.11]{gk}), so they really lie in $R^\perp$. 
Under the action of the symmetric group $S_3$, the associativity gives
$6$ different relations, and the left-hand side of (\ref{nilp}) is $S_3$-invariant,
so we get $7$ relations in total. This shows that all relations are taken into 
account.

In the right-alternative case we have $\dim \ralt(3) = 9$, and the computations are similar.
\end{proof}

\begin{corollary}
\noindent
\begin{enumerate}
\item 
A dual right- or left-alternative algebra over a field of characteristic different from $2$,
is nilpotent of degree $4$.
\item 
A dual alternative algebra over a field of characteristic different from $2$ and $3$,
is nilpotent of degree $6$.
\end{enumerate}
\end{corollary}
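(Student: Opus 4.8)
For~(i) it suffices, by the ``opposite algebra'' reduction recorded above, to treat a dual right-alternative algebra $A$, that is, an associative algebra satisfying $xyz=-xzy$. The plan here is a three-line calculation. Fix $a,b,c,d\in A$. Applying \eqref{dra} to the triple $(a,b,c)$ and multiplying on the right by $d$ gives $abcd=-acbd$; applying it to $(c,b,d)$ and multiplying on the left by $a$ gives $acbd=-acdb$; and applying it to the triple $(a,b,cd)$, with the element $cd$ occupying the last slot, gives $abcd=-acdb$. The first two relations yield $abcd=acdb$, which combined with the third forces $2\,acdb=0$, hence $abcd=acdb=0$ since the characteristic is not $2$. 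Thus $A^4=0$, and the index is exactly $4$: in the free dual right-alternative algebra the identity \eqref{dra} only identifies a product of three distinct generators with $\pm$ the same product with its last two factors transposed, so such products do not vanish.

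For~(ii) the key observation is that, over a field of characteristic other than $2$ and $3$, the identity \eqref{nilp} is equivalent to the nil identity $x^3=0$: putting $x=y=z$ in \eqref{nilp} gives $6x^3=0$, hence $x^3=0$, and conversely a routine inclusion--exclusion expansion of $(x+y+z)^3$ -- using that $(x+y)^3$, $(x+z)^3$, $(y+z)^3$ and $x^3,y^3,z^3$ all vanish -- reproduces the left-hand side of \eqref{nilp}. Therefore a dual alternative algebra is nothing but an associative algebra in which every element cubes to zero, and the Nagata--Higman theorem, valid when the characteristic is $0$ or exceeds $3$ (precisely the range excluded here is $2$ and $3$), shows at once that such an algebra is nilpotent. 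That the index is exactly $6$ is the delicate point, and the step I expect to be the main obstacle: the crude Nagata--Higman estimate only yields $\le 2^3-1=7$, so one needs the sharp nilpotency bound for cube-zero associative algebras, equivalently the two assertions $\dim\alt^!(6)=0$ and $\dim\alt^!(5)\neq 0$. The first is a finite linear-algebra problem over the multilinear degree-$6$ component of the free associative algebra modulo the consequences of \eqref{nilp}, and this is exactly where the program Albert \cite{albert} is brought in; the second is disposed of by exhibiting one surviving multilinear monomial of degree $5$. Parts~(i) and the reduction in~(ii) are routine, so establishing both $A^6=0$ and the non-vanishing of some fivefold product is where the real work lies.
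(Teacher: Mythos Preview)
Your argument for (i) is correct and is essentially the paper's proof: both use associativity together with three applications of \eqref{dra} to show that any length-$4$ product equals its own negative, hence vanishes in characteristic $\ne 2$. The specific triples chosen differ, but the mechanism is identical.

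For (ii) your reduction is also the paper's: setting $x=y=z$ in \eqref{nilp} gives $6x^3=0$, whence $x^3=0$, and then nilpotency follows from the Dubnov--Ivanov--Nagata--Higman circle of results. The one point where your expectations diverge from the paper is the sharp index $6$. You anticipate that closing the gap between the crude bound $2^3-1=7$ and the exact value $6$ is ``where the real work lies'' and requires an Albert computation. In fact this sharp bound for associative algebras satisfying $x^3=0$ is classical and is contained in the reference the paper invokes, namely \cite[\S 8.3]{drensky}; the paper mentions Albert only as an optional alternative check. So your plan would work, but the literature already supplies the missing step without computer assistance.
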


\begin{proof}
(i)
We have, by subsequent application of associativity and (\ref{dra}):
$$
(xyz)t = -(xzy)t = -x(zyt) = x(zty) = x(zt)y = - xy(zt).
$$

(ii)
Substituting $x=y=z$ in (\ref{nilp}), we get $6x^3 = 0$.
The claim then follows from the results centered around the classical
Dubnov--Ivanov--Nagata--Higman Theorem about nilpotency of associative nil algebras
(see, for example, \cite[\S 8.3]{drensky}). 

These claims also could be proved with the help of Albert.
\end{proof}

\section{Dimension sequence}\label{dim}

We are going to establish non-Koszulity using the well-known Ginzburg--Kapranov criterion 
\cite[Proposition 4.1.4(b)]{gk} which tells that if a finitely generated binary quadratic operad $\mathcal P$
over a field of characteristic zero is Koszul, then 
\begin{equation}\label{gk}
g_{\mathcal P} (g_{\mathcal P^!} (x)) = x ,
\end{equation}
where 
$$
g_{\mathcal P}(x) = \sum_{n=1}^\infty (-1)^n \frac{\dim \mathcal P(n)}{n!} x^n
$$ 
is the Poincar\'e series of the operad $\mathcal P$, and $\mathcal P^!$
is the Koszul dual of $\mathcal P$.
For this, we need to know the first few terms of the sequence $\dim \mathcal P(n)$
for the corresponding operads and/or their Koszul duals.
This is achieved with the help of Albert.

Albert computes over a fixed prime field, and we are going to explain now 
how these computations imply results valid in characteristic zero. 

Representing an operad $\mathcal P$ as the quotient of the free (= magmatic)
operad $\mathcal F$ by the ideal of relations, and considering the corresponding 
arity $n$ parts for a fixed $n$, we have 
$$
\dim \mathcal P(n) + rk\, M = \dim \mathcal F(n) = n!\,C_{n-1} ,
$$
where $M$ is a matrix consisting of coefficients of all linear
relations in $\mathcal P$ between all nonassociative multilinear monomials in $n$ 
variables.
As coefficients of identities defining our operads
are integers, $M$ is an integer matrix, and it is possible to consider
its reduction $M_p$ modulo a given prime $p$. 

It is clear that $rk\, M \ge rk\, M_p$. The question is how to ensure equality of these 
values. 
What follows is a variation on the standard theme in numerical linear 
algebra -- how to substitute rational or integer arithmetic by modular one.

Let represent the matrix $M$ in the Smith normal form, i.e., as a product
$$
M = S \> diag(d_1, \dots, d_r, 0, \dots, 0) \> T ,
$$
where $S$ and $T$ are integer quadratic matrices with determinant equal to $\pm 1$, 
$r = rk\, M$, and 
$d_1, \dots, d_r$ are nonzero integers such that $d_i$ is divided by $d_{i+1}$.
Reduction of this product modulo $p$ will produce the Smith normal form of $M_p$,
i.e., $S_p$, $T_p$ are still matrices with determinant $\pm 1$ over 
$\mathbb Z/p\mathbb Z$,
and the number of nonzero elements in the diagonal matrix
$$
diag (d_1(mod\,p), \dots, d_r(mod\,p), 0, \dots, 0)
$$
is equal to $rk\,M_p$.

If we pick primes $p_1, \dots, p_k$ in such a way that 
\begin{equation}\label{ineq}
p_1 \dots p_k > |d_1 \dots d_r| ,
\end{equation}
then 
$$
d_1 \dots d_r \not\equiv 0 \,(mod\, p_1 \dots p_k) ,
$$
hence by the Chinese Remainder Theorem
$$
d_1 \dots d_r \not\equiv 0 \,(mod\, p_i) ,
$$
and hence $rk\,M_{p_i} = rk\,M$ for some $p_i$. Consequently, if $rk\,M_{p_i} = r$
for \textit{all} $i = 1, \dots, k$, then $rk\,M = r$.

It remains to estimate $p_1 \dots p_k$ to ensure inequality (\ref{ineq}).
The product $d_1 \dots d_r$ is equal, up to sign, to the determinant of a certain 
minor $Q$ of $M$ of size $r \times r$.
As the identities defining our operads have coefficients $1$ or $-1$, all nonzero
elements of the matrix $M$ could be chosen to be equal to $1$ or $-1$, so the usual estimate in such 
situations is provided by the Hadamard inequality: 
$|\det(Q)| \le r^{\frac{r}{2}}$ (see, for example, \cite[\S 7.8.2]{horn-j}). 

To summarize: if there are primes $p_1, \dots, p_k$ such that
Albert produces the same value 
\begin{equation}\label{pn}
\dim \mathcal P(n) = m
\end{equation}
modulo these primes, and
\begin{equation}\label{est}
p_1 \dots p_k > r^{\frac{r}{2}} , \quad\text{ where }\> r = n!\,C_{n-1} - m ,
\end{equation}
then (\ref{pn}) holds over integers, and, consequently, over any field
of characteristic zero. 

Albert allows to compute over a prime field $\mathbb Z/p\mathbb Z$ with $p \le 251$.
We have modified Albert \cite{albert-mod} to allow primes up to the largest possible value
of the largest signed integer type, which is $2^{63} - 1$ on the standard modern 
computer architectures, both 32-bit and 64-bit.
We also have modified it to facilitate batch processing. 

As the time of Albert computations turns out not to depend significantly on the value of prime,
to minimize the overall computation time,
we are minimizing the number of Albert runs at the expense of larger primes, 
i.e., when choosing primes in the given range satisfying the condition (\ref{est}),
we are choosing as large primes as possible. 
This could be done with the help of PARI/GP.

Using all this, we establish:

\begin{lemma}\label{lemma-ralt}
Over a field of characteristic zero, the first $5$ terms of the sequence $\dim \ralt(n)$
are: $1, \> 2, \> 9, \> 60, \> 530$.
\end{lemma}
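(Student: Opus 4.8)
The plan is to follow, step by step, the reduction-to-modular-computation scheme set up in Section~\ref{dim}. The cases $n\le 3$ cost nothing: $\dim\ralt(1)=1$ trivially; $\dim\ralt(2)=2$, since the defining identity (\ref{ra}) lives in arity~$3$ and hence imposes no relation on the two monomials $xy$, $yx$; and $\dim\ralt(3)=9$ was already obtained at the end of the proof of the Proposition. So it remains only to determine $\dim\ralt(4)$ and $\dim\ralt(5)$.

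For $n=4$ and then $n=5$, I would have the modified Albert \cite{albert-mod} compute $\dim\ralt(n)$ over the prime field $\mathbb Z_p$, for every prime $p$ in a list $p_1,\dots,p_k$ of large primes below $2^{63}-1$. Recall that underlying such a computation is the relation matrix $M$, whose columns are indexed by the $N_n$ nonassociative multilinear monomials in $n$ variables ($N_4=120$, $N_5=1680$); Albert effectively returns the corank $N_n-rk\,M_{p_i}$. Suppose all these runs produce one and the same value, call it $r'$, and set $r:=N_n-r'$, so that $rk\,M_{p_i}=r$ for every $i$. By the Hadamard estimate of Section~\ref{dim} the nonzero elementary divisors of $M$ multiply, up to sign, to at most $r^{r/2}$; hence, verifying the inequality with PARI/GP \cite{pari}, it suffices to have chosen the $p_i$ so that $p_1\cdots p_k>r^{r/2}$, whereupon the Chinese Remainder Theorem forces $rk\,M=r$ over $\mathbb Z$, and so over every field of characteristic zero, i.e.\ $\dim\ralt(n)=r'$. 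Numerically, for $n=4$ one must beat $r^{r/2}=60^{30}$, which three primes near $2^{63}$ already do; for $n=5$ one must beat $1150^{575}$, for which on the order of a hundred such primes are needed.

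The mathematics here is routine; the only genuine obstacle is the sheer size of the arity-$5$ computation. That matrix has $1680$ columns (and a larger number of rows), and must be row-reduced over on the order of a hundred $63$-bit primes --- large, but well within reach of the batch-processing modification of Albert. The one thing to watch is that Albert must return \emph{exactly} the same value for \emph{every} prime in the list: were some prime to give a smaller rank it would be a \emph{bad} prime, one dividing an elementary divisor of $M$, and one would simply drop it, keeping enough primes that the product of the survivors still exceeds $r^{r/2}$. Once such a consistent list is in hand and inequality (\ref{est}) is verified, the asserted values $1,\,2,\,9,\,60,\,530$ follow.
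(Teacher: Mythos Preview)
Your proof follows the same scheme as the paper's: the cases $n\le 2$ are trivial, $n=3$ is referred back to the Proposition, and $n=4,5$ are handled by Albert computations over sufficiently many large primes, justified via the Hadamard/CRT argument of \S\ref{dim}. One point of difference worth noting: you apply the bound (\ref{est}) with $r$ equal to the rank of the relation matrix $M$ (so $r=1150$ for $n=5$, requiring on the order of a hundred primes near $2^{63}$), whereas the paper's proof takes $r=\dim\ralt(n)$ itself (so $r=530$, and only $39$ primes are used); for $n=4$ the two happen to coincide since $120-60=60$. Your reading is the one consistent with the derivation leading up to (\ref{est}), where $r=rk\,M$ throughout, so your prime count is in fact the more careful one.
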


\begin{proof}
Over any field, the first two values are obvious, and the third could be established 
by hand (in fact, we already did it in the proof of Proposition in \S \ref{intro}).

The are $3$ primes $< 2^{63}$ satisfying the inequality (\ref{est}) 
for $r = 4!\,C_3 -60 = 60$:
$$
2^{63} - 259,  \quad
2^{63} - 165,  \quad
2^{63} - 25.
$$
With all these $3$ primes, Albert produces $\dim \ralt(4) = 60$.

Similarly, the number of largest possible primes $< 2^{63}$ satisfying the 
inequality (\ref{est}) for $r = 5!\,C_4 - 530 = 1150$, is $93$, and Albert produces $\dim \ralt(5) = 530$
for all these $93$ primes.
\end{proof}

We have also computed $\dim \ralt(6) = 5820$ for a few random primes\footnote[2]{
The corresponding sequence was submitted to \cite{eis} as A161391.
}.

\begin{lemma}\label{lemma-alt}
Over a field of characteristic zero, the first $6$ terms of the sequence $\dim \alt(n)$
are: $1, \> 2, \> 7, \> 32, \> 175, \> 1080$\footnote[3]{
This sequence was submitted to \cite{eis} as A161392.
}.
\end{lemma}

\begin{proof}
We follow the same scheme as in the proof of Lemma \ref{lemma-ralt}.
The corresponding number of primes is $5$ for $n=4$, $127$ for $n=5$, 
and $3433$ for $n=6$, and Albert produces the expected answers for all these primes.
\end{proof}

The first $5$ terms in Lemma \ref{lemma-alt} were already specified in \cite{loday}, 
but the case $n=6$ is crucial.
It requires the only time-consuming Albert computations among all computations 
mentioned in this note.
We found that the optimal setting in this case was to add first the left-alternative
identity, and then the right-alternative one, and use the static (as opposed to the 
sparse) matrix structure. The whole computation was finished in about a week running 
in parallel on a number of CPUs ranging from 2GHz single-core to 3.2GHz dual-core.
The average execution time was less than 1 hour per prime.

\section{Non-Koszulity}\label{nonk}

\begin{theorem}\label{main}
The right-alternative, left-alternative and alternative operads over a field of 
characteristic zero are not Koszul.
\end{theorem}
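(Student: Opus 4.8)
The plan is to contradict the Ginzburg--Kapranov functional equation \eqref{gk}, using the dimension data assembled in \S\ref{dim}. If a quadratic operad $\mathcal P$ over a field of characteristic zero is Koszul, then $g_{\mathcal P}(g_{\mathcal P^!}(x)) = x$; as both $g_{\mathcal P}$ and $g_{\mathcal P^!}$ begin with $-x$, this says exactly that $g_{\mathcal P^!}$ is the compositional inverse of the power series $g_{\mathcal P}$. The point I would exploit is that, by the Corollary in \S\ref{intro}, the operads $\ralt^!$, $\lalt^!$ and $\alt^!$ are nilpotent, so the series $g_{\mathcal P^!}$ is in fact a \emph{polynomial}: of degree at most $3$ in the right- and left-alternative cases, and of degree at most $5$ in the alternative case. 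It therefore suffices to compute the compositional inverse of $g_{\mathcal P}$ up to a degree strictly above the nilpotency degree of $\mathcal P^!$ and to exhibit there a single nonzero coefficient: this already contradicts polynomiality, hence Koszulity. Notably, nothing beyond the nilpotency of $\mathcal P^!$ and the sequences $\dim\mathcal P(n)$ of \S\ref{dim} is required.

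Concretely, for the right-alternative operad Lemma~\ref{lemma-ralt} yields
$$
g_{\ralt}(x) = -x + x^2 - \tfrac{3}{2}x^3 + \tfrac{5}{2}x^4 - \tfrac{53}{12}x^5 + \cdots ,
$$
and I would invert this series term by term. Its inverse has coefficients $-1, 1, -\tfrac12$ in degrees $1,2,3$ and coefficient $0$ in degree $4$ --- consistent with $\dim\ralt^!(n) = 1, 2, 3, 0$ for $n = 1,\dots,4$, which may also be verified directly (in arity $3$ the relations of $\ralt^!$ form the $S_3$-orbit of $xyz + xzy$, of rank $3$ inside the $6$-dimensional $\mathcal{A}ss(3)$) --- but its coefficient in degree $5$ is $\tfrac16 \ne 0$, whereas $\dim\ralt^!(5) = 0$ by the Corollary. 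Thus \eqref{gk} fails at degree $5$ and $\ralt$ is not Koszul; the left-alternative case follows at once from the opposite-algebra duality recalled in \S\ref{intro}, which preserves the dimension sequence and Koszulity.

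For the alternative operad Lemma~\ref{lemma-alt} yields
$$
g_{\alt}(x) = -x + x^2 - \tfrac{7}{6}x^3 + \tfrac{4}{3}x^4 - \tfrac{35}{24}x^5 + \tfrac{3}{2}x^6 + \cdots ,
$$
and the identical procedure applies. The compositional inverse agrees through degree $5$ with the series one reads off from $\dim\alt^!(n) = 1, 2, 5, 12, 15$ for $n = 1,\dots,5$, but its coefficient in degree $6$ works out to $-\tfrac{11}{72} \ne 0$, in conflict with $\dim\alt^!(6) = 0$. This is precisely why the value $\dim\alt(6) = 1080$ of Lemma~\ref{lemma-alt} is the crucial one: the discrepancy does not appear in any lower arity.

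I do not anticipate a genuine obstacle at this stage. All the hard work --- the Albert computation of the dimension sequences --- is already behind us in \S\ref{dim}; what is left is a short, mechanical manipulation of rational formal power series (easily done with PARI/GP) and reading off the first offending coefficient.
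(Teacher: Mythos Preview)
Your argument is correct and is essentially the paper's own proof, recast in the equivalent language of compositional inverses: the paper writes down the polynomial $g_{\mathcal P^!}$ explicitly and checks that $g_{\mathcal P}\bigl(g_{\mathcal P^!}(x)\bigr)\ne x$, whereas you compute $(g_{\mathcal P})^{-1}$ and check that it is not the polynomial $g_{\mathcal P^!}$. The offending coefficients you find ($\tfrac16$ in degree~$5$ for $\ralt$, $-\tfrac{11}{72}$ in degree~$6$ for $\alt$) are exactly those appearing in the paper's compositions, as they must be.

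One small point worth highlighting, since you flag it yourself: your formulation makes transparent that the \emph{only} input needed from the dual side is the nilpotency bound of the Corollary, not the individual values $\dim\mathcal P^!(n)$ in low arities. The paper uses those values to write $g_{\mathcal P^!}$ down before composing, so it invokes a little more data than is strictly necessary; your inverse-series phrasing avoids this, and the low-arity agreement you record is then a reassuring consistency check rather than a logical ingredient.
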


\begin{proof}
The statement for the right(left)-alternative case is known (and easy),
but it will be instructive to look on it first and to compare it with the more difficult 
alternative case.

By Proposition and Corollary (i) in \S \ref{intro}, $\dim \ralt^!(3) = 3$ and 
$\ralt^!(n)$ vanishes for $n\ge 4$, so the corresponding Poincar\'e series is:
\begin{equation*}
g_{\ralt^!} (x) = - x + x^2 - \frac{1}{2} x^3 .
\end{equation*}
On the other hand, by Lemma \ref{lemma-ralt},
$$
g_{\ralt} (x) = - x + x^2 - \frac{3}{2} x^3 + \frac{5}{2} x^4 - \frac{53}{12} x^5 + O(x^6)
,
$$
and
$$
g_{\ralt} (g_{\ralt^!} (x)) = x + \frac{1}{6} x^5 + O(x^6) ,
$$
what contradicts Koszulity.

But, in fact, we can establish the same without appealing to Lemma \ref{lemma-ralt}! 
Indeed, the beginning terms of the inverse to the polynomial $g_{\ralt^!}(x)$ are:
$$
- x + x^2 - \frac{3}{2} x^3 + \frac{5}{2} x^4 - \frac{17}{4} x^5 + 7 x^6 
- \frac{21}{2} x^7 + \frac{99}{8} x^8 - \frac{55}{16} x^9 - \frac{715}{16} x^{10} 
+ O(x^{11}) .
$$
The signs alternation is violated at the $10$th term, hence this series cannot be
the Poincar\'e series of any operad, so by the Ginzburg--Kapranov criterion $\ralt^!$
is not Koszul, and hence $\ralt$ is not Koszul.
Moreover, the dimension sequence of $\ralt^!$ coincides with the dimension sequence
of the operad $\mathcal Prelie \bullet \mathcal Nil$ ($\mathcal Prelie$ is the 
operad defined by a binary operation satisfying the pre-Lie (=right symmetric) identity, 
$\mathcal Nil$ is the operad defined by a skew-symmetric binary 
operation with vanishing compositions, and $\bullet$ is Manin's black product), and the 
corresponding computation establishing its non-Koszulity was already performed in 
\cite[\S 4.5]{vallette}.

Now consider the alternative case.
By Corollary (ii) in \S \ref{intro}, $\mathcal Alt^!(n)$ vanishes for $n \ge 6$.
Either computation with Albert, or reference to \cite[Propositions 1 and 2]{lopatin} 
provides dimensions of these spaces for small $n$, which allows us to write down 
the Poincar\'e series of the operad $\mathcal Alt^!$:
\begin{equation}\label{galt}
g_{\mathcal Alt^!} (x) 
= - x + x^2 - \frac{5}{6} x^3 + \frac{1}{2} x^4 - \frac{1}{8} x^5 .
\end{equation}
On the other hand, by Lemma \ref{lemma-alt},
\begin{equation*}
g_{\mathcal Alt} (x) 
= 
- x + x^2 - \frac{7}{6} x^3 + \frac{4}{3} x^4 - \frac{35}{24} x^5 + \frac{3}{2} x^6 
+ O(x^7) ,
\end{equation*}
and
$$
g_{\mathcal Alt} (g_{\mathcal Alt^!} (x)) = x - \frac{11}{72} x^6 + O(x^7) ,
$$
what contradicts Koszulity.

Note that in the alternative case we really need to compute dimension sequence of the 
alternative operad up to $6$th term (i.e., to utilize Lemma \ref{lemma-alt}). A mere look at the 
inverse to the polynomial $g_{\mathcal Alt^!}(x)$  does not seem to work: we have checked with PARI/GP that
the inverse has alternating signs up to degree $1000$.
On the other hand, as noted in \cite[\S 4.2]{goze-remm-arxiv}, the beginning terms 
of the inverse to $g_{\mathcal Alt}(x)$ are:
$$
-x + x^2 - \frac{5}{6} x^3 + \frac{1}{2} x^4 - \frac{1}{8} x^5 - \frac{11}{72} x^6 
+ O(x^7) ,
$$
what provides an alternative proof of non-Koszulity of $\alt$ without appealing to 
$g_{\mathcal Alt^!}(x)$.
\end{proof}

Sometimes in the literature one sees expressed the viewpoint that non-Koszulity is a 
rather pathological property, and all ``natural'', ``occuring in the real life'' 
algebras should be algebras over a Koszul operad (see, for example, 
Remarks 3.98 and 3.131 in \cite{mss-book}). 
As we see, alternative algebras provide a ``real life'' example
violating this principle (another, albeit probably less ``real life'' contender is 
presented in \cite{dzhu-nov}).

\section{Positive characteristic}

The original Ginzburg--Kapranov operadic theory involves representation theory of the
symmetric group peculiar to characteristic zero case. 
While extensions of the operadic theory to the case of 
positive characteristic exist, none of them, to our knowledge, includes 
an analog of the Ginzburg--Kapranov criterion for Koszulity of a quadratic operad 
in terms of Poincar\'e series.

While, therefore, checking the validity of equation (\ref{gk}) in positive characteristic 
does not make much sense, the question of computing the dimension sequence 
$\dim \mathcal P(n)$ for various operads $\mathcal P$ is still of interest.
In this section we collect a few remarks and computational results concerning
this question for the alternative and right-alternative operads and their Koszul duals.

For the Koszul dual operads, the corresponding dimension sequences terminate at low terms
as indicated in the proof of the theorem in \S \ref{nonk}, the same way for zero and positive
characteristics, except for the case of the dual alternative operad over a field
of characteristic~$3$.

\begin{conjecture}
Over a field of characteristic $3$, $\dim \mathcal Alt^! (n) = 2^n - n$.
\end{conjecture}

For $n \le 8$ the claim could be proved with the aid of Albert.
We will outline the main idea of a possible proof in the general case, whose full 
implementation appears to be long and somewhat cumbersome, and will drive us far away
from the main question considered in this note. We came up with this idea by inspecting the 
corresponding entry A000325 in \cite{eis}.

\begin{proof}[Sketch of a possible proof]
For associative algebras, the identity (\ref{nilp}) is equivalent to the identity
\begin{equation*}
[[x,y],y] = 0 .
\end{equation*}
In other words, an associative algebra over a field of characteristic $3$ is 
dual alternative if and only if its associated Lie algebra is $2$-Engel. It is well-known
that $2$-Engel Lie algebras are nilpotent of order $4$. Free associative algebras
which are Lie-nilpotent of order $4$ were studied in the recent paper \cite{etingof}.
It is possible to extend some of the results of that paper to the case of characteristic $3$,
and, in particular, to construct a presentation of such algebras. From this, by adding
more relations, one may construct a presentation of free dual alternative algebras,
and using Composition (=Diamond) Lemma, to get a description of a basis of such algebras
in combinatorial terms. For elements of the basis containing each free generator
in the first degree, these combinatorial terms are expressed as the so-called Grassmann
permutations, i.e. $\alt^!(n)$ has a basis consisting of associative monomials of the form
$a_{i_1} \dots a_{i_n}$ such that the permutation $(i_1 \dots i_n)$ has exactly one
descent. The number of such permutations is $2^n - n$.
\end{proof}

The case of characteristic $3$ is also exceptional for the alternative operad:
in this case, the first $5$ terms of $\dim \mathcal Alt (n)$ are the same as in 
Lemma \ref{lemma-alt}, while the $6$th term is equal, surprisingly, to 
$1081$\footnote[2]{
The corresponding sequence was submitted to \cite{eis} as A161393.
}.

Note also that the scheme of computations presented in \S \ref{dim} is insufficient
to deduce the validity of (\ref{pn}) over \textit{all} prime fields. 
Either by the standard ultraproduct argument, or observing,
by the same argument as in \S 2, that the equality (\ref{pn}) in characteristic
zero implies the same equality in characteristic $p$ for all $p > r^{\frac{r}{2}}$,
we may deduce that it is valid for all but a finite number of characteristics.
So, in principle, we could establish the validity of (\ref{pn}) in all characteristics 
by verifying it modulo all primes $\le r^{\frac{r}{2}}$ and for one prime 
$> r^{\frac{r}{2}}$. 
This is, however, computationally infeasible in almost all practical cases.

To be able to establish the equality (\ref{pn}) in all characteristics, apparently other
methods are needed. For example, one may try to use the capability of Albert
to produce multiplication table between elements of $\mathcal P(n)$ up to the given
degree. It seems that the scheme, based on the Chinese Remainder Theorem and 
similar to those presented in \S 2, but utilizing the multiplication table instead of
just dimensions of the corresponding spaces of multilinear monomials,
could be used for that, provided that all coefficients in the computed multiplication
tables are rational numbers with relatively small numerators and denominators modulo
the respective primes.
According to a few Albert computations we have performed for 
$\alt(6)$, the latter seems to be the case for the alternative operad.

\section{Questions}

In addition for an already mentioned in \S \ref{nonk} example from \cite{vallette}, 
there are several other proofs in the literature of non-Koszulity of various operads 
using the Ginzburg--Kapranov criterion or its $n$-ary analogs: in \cite[footnote to \S 3.9(d)]{getzler-k} 
for the so-called mock-Lie and mock commutative operads
(which are Koszul dual to each other and are cyclic quadratic operads with one 
generator); in \cite[Remark 3.98]{mss-book} for associative anticommutative algebras
(and, dually, for ``commutative Lie algebras''); 
in \cite[Proposition 2.3]{goze-remm} for certain Lie-admissible
operads dubbed $G_4$-$Ass$ and $G_5$-$Ass$;
in \cite[\S 3.4,3.6]{goze-remm-arxiv} for certain third power associative operads dubbed $G_i$-$p^3Ass$;
in \cite[Theorem 10.1]{dzhu-alia} for a certain skew-symmetric operad dubbed 
left-Alia; in \cite{dzhu-nov} for the Novikov operad;
and in \cite[Example 16 and Proposition 17]{markl-remm} for certain operads with 
$n$-ary operation dubbed $tAss^n_d$.
In each of these cases, it was enough to check Poincar\'e series up to a relatively
low degree term.
It is interesting whether there exists a bound on the degree of Poincar\'e series
such that the validity of the identity (\ref{gk})
for a binary quadratic operad $\mathcal P$
up to this degree guarantees its validity in all degrees.

It is also interesting to give a concrete example of a binary quadratic operad 
which is not Koszul but for which the equality (\ref{gk}) holds (such examples exist for 
associative quadratic algebras -- see \cite[\S 3.5]{pp} and references therein).

Is it true that all terms of the inverse of the polynomial (\ref{galt}) have 
alternating signs? If yes, what combinatorial interpretation this may have?
(Question asked by Vladimir Dotsenko). A similar question
about an innocently-looking polynomial of degree $15$ and with only $3$ nonzero terms
was asked in \cite{markl-remm}. Somewhat surprisingly, such questions seem to be difficult.

Note also that it remains a challenging problem to compute the 
Poincar\'e series of $\mathcal Alt$.

And, finally, we are taking the opportunity to advertise some new classes of algebras.
In \cite[Theorem 5.1]{dzhu-alia}, all possible skew-symmetric identities of degree $3$ 
are classified. This classification has a symmetric analog: namely, every symmetric
identity of degree $3$ could be reduced to one of the following identities:
\begin{gather*}
[\{x,y\},z]+[\{y,z\},x]+[\{z,x\},y] = 0            \\
\{\{x,y\},z\}+\{\{y,z\},x\}+\{\{z,x\},y\} = 0      \\
\{x,y\}z+\{y,z\}x+\{z,x\}y = 0 ,
\end{gather*}
where $[x,y] = xy - yx$ and $\{x,y\} = xy + yx$.
Any right- or left-alternative algebra satisfies the first of these identities,
and the second identity is exactly (\ref{nilp}) (with appropriately inserted left-normed
brackets, as associativity is no longer assumed).
It appears to be interesting to study algebras satisfying these identities,
in particular, describe free and simple algebras in these classes, and to look at the
corresponding operads.

\section*{Acknowledgements}

Thanks are due to Vladimir Dotsenko and Leonid Positselski for explanations
concerning Koszulity and the Ginzburg--Kapranov criterion, to Alexander Feldman for 
advice concerning computing issues and comments on the preliminary version of the 
manuscript, and to the anonymous referee for a very careful reading of the manuscript
and useful comments as well.

\renewcommand{\refname}{Software and online repositories}

\end{document}